\pgfplotsset{compat=newest}
\newcommand{\ie}{i.e.}
\newcommand{\stateSymbol}{x}
\newcommand{\paramSymbol}{p}
\newcommand{\SystemState}{\boldsymbol{\stateSymbol}}
\newcommand{\parameter}{\boldsymbol{\paramSymbol}}
\newcommand{\setParam}{\mathcal{\MakeUppercase\paramSymbol}}
\newcommand{\setStateTime}{\mathcal{S}_{x}}
\newcommand{\dimensionState}{n_\stateSymbol}
\newcommand{\dimensionParam}{n_\paramSymbol}
\newcommand{\approximation}[1]{\tilde{#1}}
\newcommand{\flowCont}[1][t]{\nu_{#1}}
\newcommand{\flowDiscrete}[1][\stepsize]{\boldsymbol{\eta}_{#1}}
\newcommand{\iterator}{k}
\newcommand{\stepsize}{h}
\newcommand{\integratorOrder}{p}
\newcommand{\integratorError}{\boldsymbol{e}}
\newcommand{\NN}{\mathcal{N}_\NNparam}
\newcommand{\NNactivation}{\xi}
\newcommand{\NNactivationStuck}{\Xi}
\newcommand{\NNdepth}{L}
\newcommand{\NNparam}{\theta}
\newcommand{\NNsetParam}{\Theta}
\newcommand{\NNloss}{\mathcal{L}}
\newcommand{\NNtrainingDataIn}{\mathcal{D}}
\newcommand{\NNtrainingDataOut}{\mathcal{E}}
\newcommand{\NNcarTrainingData}{N_{\NNtrainingDataIn}}
\newcommand{\NNdimIn}{n_{\mathrm{in}}}
\newcommand{\NNdimOut}{n_{\mathrm{out}}}
\newcommand{\NNerror}{\epsilon_{\NN}}
\newcommand{\identityMarix}[1][]{I_{}}
\DeclareMathOperator*{\argmin}{arg\,min}
\newcommand{\norm}[1]{\left\lVert#1\right\rVert}
\newcommand{\abs}[1]{\left|#1\right|}
\newcommand{\setR}{\mathbb{R}}
\newcommand{\setN}{\mathbb{N}}
\newcommand{\auxCalc}[1]{%
	\ifthenelse{\boolean{showcalculations}}{
		\gdef\mycolor{blue}#1 \gdef\mycolor{black}\color{black}}{}}
\everymath\expandafter{\the\everymath\color{\mycolor}}  
\def\mycolor{black}
\newcommand{\myVector}[1]{(#1)}
\definecolor{kathrin}{rgb}{0.5,0.0,0.5} 
\definecolor{kathrinCom}{rgb}{0.4,0.1,0.4} 
\definecolor{kathrinCor}{rgb}{0.0,0.9,0.9} 
\definecolor{amine}{rgb}{0.0,0.75,0.0}
\newcommand{\kCh}[2]{
	\ifthenelse{\boolean{showchanges}}{ \sout{#1}
		{\color{kathrinCor}{#2}\;}  \addcontentsline{tdo}{todo}{{\color{kathrinCor}{K. change }}}}{#2{}}
	}
\newtheorem{exmp}{Example}
\newtheorem{prop}{Proposition}
\theoremstyle{remark}
\newtheorem{rem}{Remark}
\begin{document}
	\title[Neural network-enhanced integrators for simulating ordinary differential equations]{Neural network-enhanced integrators for simulating ordinary differential equations}
	
	%%=============================================================%%
	%% GivenName	-> \fnm{Joergen W.}
	%% Particle	-> \spfx{van der} -> surname prefix
	%% FamilyName	-> \sur{Ploeg}
	%% Suffix	-> \sfx{IV}
	%% \author*[1,2]{\fnm{Joergen W.} \spfx{van der} \sur{Ploeg} 
		%%  \sfx{IV}}\email{iauthor@gmail.com}
	%%=============================================================%%
	
	\author*[1]{\fnm{Amine} \sur{Othmane}}\email{amine.othmane@uni-saarland.de}
	
	\author[1]{\fnm{Kathrin} \sur{Flaßkamp}}\email{kathrin.flasskamp@uni-saarland.de}
	
	\affil[1]{Systems Modeling and Simulation, Saarland University, Germany}
	
	%%==================================%%
	%% Sample for unstructured abstract %%
	%%==================================%%
	
	\abstract{Numerous applications necessitate the computation of numerical solutions to differential equations across a wide range of initial conditions and system parameters, which feeds the demand for efficient yet accurate numerical integration methods.
		This study proposes a neural network (NN) enhancement of classical numerical integrators. %assesses the effectiveness of neural network (NN) enhanced integrators.
		NNs are trained to learn integration errors, which are then used as additive correction terms in numerical schemes.
		The performance of these enhanced integrators is compared with well-established methods through numerical studies, with a particular emphasis on computational efficiency. Analytical properties are examined in terms of local errors and backward error analysis. Embedded Runge-Kutta schemes are then employed to develop enhanced integrators that mitigate generalization risk, ensuring that the neural network's evaluation in previously unseen regions of the state space does not destabilize the integrator.
		It is guaranteed that the enhanced integrators perform at least as well as the desired classical Runge-Kutta schemes. The effectiveness of the proposed approaches is demonstrated through extensive numerical studies using a realistic model of a wind turbine, with parameters derived from the established simulation framework OpenFast.
	}

	\keywords{artificial neural networks, numerical methods, ordinary differential equations, Runge-Kutta}
	
	%%\pacs[JEL Classification]{D8, H51}
	
	%%\pacs[MSC Classification]{35A01, 65L10, 65L12, 65L20, 65L70}
	
	\maketitle

\section{Introduction}
%The need for accurate and efficient numerical solutions to differential equations remains a cornerstone of modern computational mathematics and engineering.
%For example, the analysis of fatigue effects and lifetime prediction requires repeatedly solving differential equations under a variety of varying initial conditions and system parameters. 
%This scenario demonstrates the challenges faced in numerical integration: the need for high accuracy in approximating solutions while being constrained by computational resources. 
%The traditional approach, which relies heavily on classical numerical methods such as Runge-Kutta methods (e.g., \cite{hairer2008,butcher2016numerical}) and symplectic integrators (e.g., \cite{hairer2002b}), often encounters limitations when scaling to high-dimensional systems or when long-term integration is required.
The need for accurate yet efficient numerical solutions to differential equations remains a cornerstone of modern computational mathematics and engineering. For example, the analysis of fatigue effects and lifetime prediction of complex technological systems, e.g.\ wind turbines, requires repeatedly solving differential equations under a variety of varying initial conditions and system parameters.
The inherently stochastic characteristics of wind conditions encompassing turbulence, shear, and wake interactions necessitate extensive numerical simulations over a wide range of environmental and operational conditions to enable reliable fatigue assessment.
This example demonstrates the challenges faced in numerical integration: the need for high accuracy in approximating solutions while being constrained by computational resources. The traditional approach, which relies heavily on classical numerical methods such as Runge–Kutta methods (e.g., \cite{hairer2008,butcher2016numerical}), often encounters limitations when scaling to high-dimensional systems or when long-term integration is required.

\subsection{State of the art}
In order to overcome these limitations, extrapolation techniques have been developed to enhance both the accuracy and efficiency of numerical solvers. One of the earliest such techniques is Richardson extrapolation \cite{richardson1910,richardson1927,Atkinson1989}, which systematically eliminates the leading error term by combining solutions computed with different step sizes. More generally, extrapolation methods form the basis of adaptive algorithms that refine the solution iteratively using estimates of the local error. In particular, the Bulirsch–Stoer algorithm \cite{StoerBulirsch2002} employs Richardson extrapolation applied to the modified midpoint rule, efficiently estimating the local error and extrapolating to the zero-step-size limit. These methods have proven particularly effective in handling stiff or high-dimensional systems, as they leverage accurate local error estimates to reduce the overall computational effort without sacrificing precision.
Adaptive step-size control further enhances solver performance. Embedded Runge–Kutta pairs \cite{DormandPrince1980} dynamically adjust the time step using error estimates computed during the integration process, ensuring that the local error remains within a specified tolerance while optimizing computational effort.

Neural networks possess remarkable approximation properties, as demonstrated by the universal approximation theorem, which asserts that a neural network with a sufficient number of neurons and appropriate activation functions, such as the rectified linear unit, can under mild assumptions approximate any continuous function to any desired degree of accuracy \cite{Cybenko1989,Hornik1989,Hornik1991,Hanin2019,Yarotsky2017}. These properties have been used  in recent years to develop neural network‐based methods for approximating the solutions of differential equations. In particular, mesh‐free approaches for solving partial differential equations (PDEs) have been developed and shown to outperform traditional solvers in terms of computational speed and flexibility \cite{li2020a,li2020b}. Moreover, hybrid strategies that combine neural networks with classical time‐stepping  low-order methods have been investigated as a means to enhance accuracy and stability in the numerical solution of ordinary differential equations (ODEs) \cite{shen2020,huang2022,huang2023}. The correction of the forward Euler method has been considered and analyzed in \cite{shen2020,huang2022}. Simulations using an enhanced Heun method have been considered in \cite{shen2020}.

\subsection{Contribution}
Taking advantage of the ability of NNs to approximate nonlinear functions, we propose a general methodology in which NNs are trained to learn the integration error of arbitrary order explicit one-step Runge-Kutta methods.
This learned error is then used as an additive correction term for the classical discrete flow to  achieve a better compromise between computational load and accuracy.
%reach a better accuracy with reduced computational burden. 
Such a hybrid approach aims to combine the physics-based classical numerical techniques with the adaptive learning capabilities of neural networks. %error size.
Our investigation includes an analysis of the theoretical properties of the solvers such as global error, modified differential equation and accuracy-efficiency trade-offs for arbitrary Runge-Kutta methods. 
%To ensure that the neural networks do not destabilize the integrator due to poor generalization properties an approach based on embedded schemes and a priory computed thresholds is proposed.
Moreover, by using embedded Runge-Kutta schemes in an adaptive algorithm, we ensure that outside the validity domain of the network, the proposed solver is not destabilized.
It is guaranteed that the enhanced integrators perform at least as well a user defined classical Runge-Kutta schemes.
The numerical experiments for a non-trivial academic example validate the theoretical findings and confirm that the enhanced integrator can outperform classical methods in the accuracy-efficiency trade-off.

\subsection{Application case study}
The effectiveness of the proposed methods are shows using a wind turbine model developed within the CADynTurb framework \cite{Geisler2021,Geisler2021a,Geisler2024} with 16 state components. Designed to capture the nonlinear, multi-degree-of-freedom dynamics of wind turbines, the model incorporates eight generalized coordinates that represent key structural and electromechanical components—including tower displacements, blade flapwise and edgewise bending modes, and drivetrain as well as generator dynamics. Derived from the Euler–Lagrange equations and parameterized using publicly available data such as the NREL 5-MW reference turbine \cite{Jonkman2009}, this model is capable of simulating the complex cyclic loading conditions essential for fatigue analysis \cite{Shaler2023,Ding2024,Liu2023,Schafhirt2022}. Its balance between physical fidelity and computational efficiency makes it ideally suited for extensive simulations required in design optimization and real-time estimation tasks.

\subsection{Outline} The remainder of this paper is structured as follows. Section~\ref{sec:Problem_statement} defines the problem statement and recalls necessary background material. Enhanced integrators are proposed in Section~\ref{sec:enhanced_integrators}. First, a simple additive correction is proposed and analyzed with respect to local error approximation and backward error analysis. Then, embedded Runge-Kutta schemes are used to derive an approach that mitigates generalization risks.
In Section~\ref{sec:numerical_results}, the effectiveness of the approach is analyzed in extensive numerical case studies.
Concluding remarks are given in Section~\ref{sec:Conclusion}.

%===============================================================================
% Problem statement and notation
%===============================================================================
\section{Problem statement and background}
\label{sec:Problem_statement}
Let $t_{\mathrm{end}}$ be some scalars such that $t_{\mathrm{end}}>0$ and denote by $\setR_{\geq 0}$ the set of all real numbers greater than or equal to $0$. Consider the set $\setStateTime \subset\setR^{\dimensionState}$, $\dimensionState\in\setN$ and  let $ \SystemState_0$ be an element of ${\setStateTime}$.
 Let $\setParam$ be a subset of $\setR^{\dimensionParam}$, $\dimensionParam\in\setN$, with $\parameter_0 \in \setParam$. In the following, the initial value problem 
\begin{equation}
	\dot{\SystemState}(t) = f( \SystemState(t), \parameter_0), \quad \SystemState(0) = \SystemState_0\in\setR^\dimensionState,
	\label{eq:ODE}
\end{equation}
is considered for ${t\in[0,t_{\mathrm{end}}]}$, where the vector field ${f: \setStateTime \times \setParam \rightarrow \setR^{\dimensionState}}$ is assumed to be   Lipschitz continuous with respect to its first argument.
We define the \textit{flow} ${\flowCont:\setR^{\dimensionState}\times\setR^{\dimensionParam}\rightarrow\setR^{\dimensionState}}$ to be the function that associates, for a given time $t$, to the initial value  $\SystemState_0$ at time zero and parameter $\parameter_0$ the corresponding solution of \eqref{eq:ODE} at time $t$. 
Thus, this map is defined by
\begin{equation}
	\flowCont(\SystemState_0,\parameter_0)=\SystemState(t) \quad\text{with}\quad \SystemState(0)=\SystemState_0,
\end{equation}
with $\SystemState$ solving \eqref{eq:ODE}.

For most initial value problems, analytical solutions are unattainable.
Then, the flow of \eqref{eq:ODE} can be approximated by numerical schemes. Explicit one-step schemes, typically characterized in Butcher tableaux, are a common choice (see, e.g. \cite{butcher2016numerical,hairer2008}).
%Explicit numerical schemes with fixed step size can then be used to approximate the flow of \eqref{eq:ODE}, for example. 
For a given step size $\stepsize$, it follows that $\SystemState(\iterator\stepsize)\approx\approximation{\SystemState}_{\iterator}$ where
the latter is the result of the recursion 
\begin{equation}
	\approximation{\SystemState}_{\iterator+1}=\flowDiscrete(\approximation{\SystemState}_{\iterator}),\quad \forall \iterator\geq 0,
	\label{eq:explicit_num}
\end{equation}
such that $\approximation{\SystemState}_{0}=\SystemState_0$, where the map $\flowDiscrete:\approximation{\SystemState}_{\iterator}\mapsto \approximation{\SystemState}_{\iterator+1}$ is called the \textit{discrete flow} (see for instance \cite{hairer2008,butcher2016numerical}).

\begin{exmp}
	\label{exmp:Euler}
	The simplest of all numerical schemes is the forward Euler method. 
	The discrete flow is then given as 
	\begin{equation}
		\flowDiscrete(\SystemState) = \SystemState + \stepsize f(\SystemState, \parameter_0).
	\end{equation}	
\end{exmp}
\begin{exmp}
    \label{exmp:Heun}
	A second-order improvement is provided by the explicit trapezoidal rule, also known as Heun's method. It evaluates the vector field at both the initial and predicted states such that $\flowDiscrete(\SystemState) = \SystemState + \frac{\stepsize}{2}(k_1(\SystemState) + k_2(\SystemState))$ with
	\begin{align}
		k_1(\SystemState) &= f(\SystemState, \parameter_0), \\
		k_2(\SystemState) &= f(\SystemState + \stepsize k_1, \parameter_0).
	\end{align}
\end{exmp}

In the sequel, the error $\flowDiscrete(\SystemState)-\flowCont[(\iterator+1)\stepsize](\SystemState,\parameter_0)$, for arbitrary $\SystemState\in\setR^\dimensionState$, incurred when integrating the differential equation over a single step of size $\stepsize$, starting from the initial condition $\SystemState((\iterator-1) h)$, is called \emph{local error} (see, e.g., \cite{hairer2008}).
A numerical scheme is said to be of order $\integratorOrder\in\setN$, if the local error is proportional to $\stepsize^{\integratorOrder+1}$ (see, e.g., \cite{hairer2008}).
It is known that reducing the step size or using a high-order integration scheme can reduce the error. However, this yields a critical trade-off between computation speed and accuracy. Different approaches dealing with this trade-off such as extrapolation techniques or variable step sizes have also been developed in the literature \cite{richardson1910,richardson1927,Atkinson1989,StoerBulirsch2002,butcher2016numerical,hairer2008}.

\paragraph*{Problem statement:}
This contribution analyzes the capabilities of artificial neural networks  to enhance classical numerical integrators with the goal of achieving a better accuracy versus efficiency compromise for solving initial value problems of  type \eqref{eq:ODE} across a wide range of initial conditions $\SystemState_0$ and system parameters $\parameter_0$ in the interval $[0,t_{\text{end}}]$.

%===============================================================================
% Neural network enhanced integrators
%===============================================================================
\section{Neural network enhanced integrators}
\label{sec:enhanced_integrators}

In this section, we outline a method to correct numerical integration errors by training artificial neural networks to learn these discrepancies.
 The error approximation is then used as an additive correction term to the discrete flow.
  Then, enhanced integrators with mitigated generalization risk are proposed.
   The theoretical properties of the approaches are analyzed.

\subsection{Definition of neural networks}
Consider a Lipschitz continuous activation function
$\NNactivation:\setR\rightarrow\setR$ and define for ${\SystemState\in\setR^{n}}$ the function ${\NNactivationStuck:\SystemState\mapsto \myVector{\NNactivation(x_1),\hdots,\NNactivation(x_{n})}^T}$, where $x_j$, $j\in\{1,\hdots,n\}$, is the $j$-th component of $\SystemState$. An artificial neural network of depth $\NNdepth\in\setN$ is a function $\NN:\setR^{\NNdimIn}\rightarrow\setR^{\NNdimOut}$, $\NNdimIn,\NNdimOut\in\setN$, that maps $\SystemState\in\setR^{\NNdimIn}$ to
\begin{equation}
\NN(\SystemState)=T_\NNdepth\NNactivationStuck\left(T_{\NNdepth-1}\NNactivationStuck\left(\hdots \NNactivationStuck(T_1(\SystemState))\right)\right),
\end{equation}
where $T_l$, $l\in\{1,\hdots,\NNdepth\}$, is an affine-linear transformation defined as
\begin{equation}
	T_l:\setR^{n_{l-1}}\rightarrow\setR^{n_l},\; T_lx=W^{(l)}x+b^{(l)}, \; n_{0}=\NNdimIn,\; n_{\NNdepth}=\NNdimOut,
\end{equation}
with ${W^{(l)}\in\setR^{n_l\times n_{l-1}}}$ the weight matrices and ${b^{(l)}\in\setR^{n_l}}$ the bias vector of the $l$-th layer all of which are summarizes in $\theta$ such that ${\NNparam=\left\{(W^{(l)},b^{(l)})\right\}_{l=1}^L}$. Denote in the following by $\NNsetParam$ the set of all possible ${(W^{(l)},b^{(l)})}$, \ie, 
$$\NNsetParam=\left\{(W^{(l)},b^{(l)})\in\setR^{n_l\times n_{l-1}}\times \setR^{n_l}|l\in\{1,\hdots,\NNdepth\}\right\}.$$
%===============================================================================
% Classical Runge-Kutta integrators
%===============================================================================
\subsection{Learning problem for local error correction}\label{sec:NN_correction}
Let  ${\mathcal{K}}$ be a finite subset of $\setN$ and  consider the set ${\NNtrainingDataIn = \{(\iterator_i,\SystemState_{0i},\parameter_i)\in\mathcal{K}\times\setStateTime\times\setParam\,|\,i\in\{1,\hdots,{\NNcarTrainingData} %\NNtrainingDataIn
\}\}}$ of  finite cardinality $\NNcarTrainingData$ with time steps, initial conditions, and parameters. For each element in $\NNtrainingDataIn$ define for a numerical scheme with discrete flow $\flowDiscrete$ the local error $r:\NNtrainingDataIn\rightarrow\NNtrainingDataOut\subset\setR^\dimensionState$ that maps each element $(\iterator,\SystemState_0,\parameter_0)$ to
\begin{equation}
	r(\iterator,\SystemState_0,\parameter_0)={\flowCont[(\iterator+1)\stepsize](\SystemState_0,\parameter_0)-\flowDiscrete\!\left(\!\flowCont[\iterator\stepsize](\SystemState_0,\parameter_0)\!\right)}.
	\label{eq:local_error}
\end{equation}
We now employ a neural network, denoted by $\NN$, to construct an approximation of the local error \eqref{eq:local_error}, with the aim of its subsequent incorporation into an adapted numerical scheme for error compensation. 
The parameter set $\NNparam$ of $\NN$ is computed using the flow $\flowCont$ or a high order approximation of it evaluated at different times and for various initial states and parameter configurations. The generation of the data can be done using exact analytical solutions, high-fidelity numerical solvers, or empirical data. Subsequent validation against simulated trajectories reveals that a small number of trajectories suffice to yield satisfactory approximations. This is discussed in detail in Section \ref{sec:numerical_results}.

The parameters $\NNparam$ of $\NN$ are then computed as the solution of an optimization problem
\begin{subequations}\label{eq:NN_Training}
	\begin{equation}
		\NNparam=\argmin_{\tilde{\NNparam}\in\NNsetParam}\frac{1}{\NNcarTrainingData}\sum_{(\iterator,\SystemState_0,\parameter_0)\in\NNtrainingDataIn}\NNloss_{\tilde{\NNparam}}(\iterator,\SystemState_0,\parameter_0)
		\label{eq:NN_OP}
	\end{equation}
	with the function  $\NNloss_{\tilde{\NNparam}}$ given by
	\begin{equation}
		\begin{aligned}
			\NNloss_{\tilde{\NNparam}}(\iterator,\SystemState_0,\parameter_0)=&\Big\lVert\NN\bigl(\flowCont[\iterator\stepsize](\SystemState_0,\parameter_0),\parameter_0\bigr)
			-\tfrac{1}{\stepsize^{\integratorOrder+1}}r\!\bigl(\iterator,\SystemState_0,\parameter_0\bigr)\Big\rVert^2,
		\end{aligned}
		\label{eq:NN_loss}
	\end{equation}
\end{subequations}
to approximate the local error, where $\norm{\cdot}$ is some norm. 

Note that the loss function \eqref{eq:NN_loss} depends on the step size $\stepsize$ and thus classical approximation arguments for numerical integrators can be applied for the subsequent analysis.
For a consistent integrator, there exists a function $\delta$ such that the local truncation error is equal to $\stepsize^{p+1}\delta(\SystemState_0)+\mathcal{O}\left(\stepsize^{p+2}\right)$ \cite{hairer2002b}. %Then, the neural network with optimal parameters effectively approximates the function
%\(\delta\) corresponding to the leading \(\mathcal{O}(h^{p+1})\) term 
%in the local-error expansion.
Because the optimization problem \eqref{eq:NN_OP} uses samples 
\(\SystemState_0\) in the entire region spanned by the training data, 
the network thus approximates \(\delta(\SystemState_0)\) across that 
entire state domain. Thus, when the loss function \eqref{eq:NN_loss} of the optimal solution is of order $\mathcal{O}\left(h^{p+2}\right)$ the approach 
provides first-order approximation in the step size \(h\) to the 
local error within that region.
%\begin{rem}
%	Evaluating the loss function $\NNloss_{\NNparam}$ in the optimization problem \eqref{eq:NN_OP} requires the computation of the flow for different initial conditions and parameters using analytical formulas, very accurate numerical solvers or real world measurements for all elements in $\NNtrainingDataIn$.
%	However, the evaluation of the simulation results shows that only very few trajectories are required to achieve very good results. 
%\end{rem}

%Assuming a good approximation,
\subsection{Enhanced solvers}
Once the training of the neural network successfully converged to a good  minimizer of \eqref{eq:NN_OP},
the solution of \eqref{eq:ODE} can then be approximated using an enhanced numerical scheme defined by
\begin{equation}
	\approximation{\SystemState}_{\iterator+1}=\flowDiscrete(\approximation{\SystemState}_{\iterator})+\stepsize^{\integratorOrder+1}\NN(\approximation{\SystemState}_{\iterator},\parameter_0),
	\label{eq:enhanced_explicit_num} 
\end{equation}
for all $ \iterator\geq 0$ and $\approximation{\SystemState}_{0}=\SystemState_0$.

Using an approximation of the local error to enhance numerical schemes is a well known approach in the classical numerical integration literature. A well known method is the Richardson extrapolation developed in \cite{richardson1910,richardson1927} (see also \cite[Ch.~II.4]{hairer2008}).
Consider a numerical approximation $\approximation{\SystemState}_{\iterator}$ of the solution $\flowCont(\SystemState_0, \parameter_0)$ of the initial value problem \eqref{eq:ODE} obtained using a numerical scheme with step size $\stepsize$. The Richardson extrapolation technique aims to improve the accuracy of this approximation by combining approximations with different step sizes. Let $\stepsize_1$ and $\stepsize_2$ be two different step sizes such that ${\stepsize_2 = \frac{\stepsize_1}{2}}$. Denote by $\approximation{\SystemState}_{\iterator}(\stepsize_1)$ and $\approximation{\SystemState}_{\iterator}(\stepsize_2)$ the approximations obtained with step sizes $\stepsize_1$ and $\stepsize_2$, respectively. The Richardson extrapolated solution $\approximation{\SystemState}_{\iterator}^{\mathrm{rich}}$ is given by 
\begin{equation}
	\approximation{\SystemState}_{\iterator}^{\mathrm{rich}} = \frac{2^p \approximation{\SystemState}_{\iterator}(\stepsize_2) - \approximation{\SystemState}_{\iterator}(\stepsize_1)}{2^p - 1}, \label{eq:richardson} 
\end{equation} 
where $p$ is the order of the numerical scheme used. This extrapolation effectively cancels out the leading error term, resulting in a more accurate approximation of the true solution.
	
The enhanced numerical scheme \eqref{eq:enhanced_explicit_num} resembles the idea of extrapolations methods. However, approximating the local truncation error using neural networks instead of the Richardson extrapolation method, for example, yields more design freedom. This extra flexibility given by possible variations of the cost function in \eqref{eq:NN_Training} permits the incorporation of tailored regularization terms, domain-specific weighting, or alternative error norms into the training process, allowing the network to more accurately capture and correct the local error structure. Consequently, such an approach can lead to enhanced robustness and improved generalization of the error correction compared to fixed, classical extrapolation techniques.

\subsection{Analysis of enhanced integrators}
\begin{prop}
	Consider the initial value problem \eqref{eq:ODE} and an artificial neural network
	$
	\NN : \setR^{\dimensionState} \times \setR^\dimensionParam \;\to\;\setR^{\dimensionState}.
	$
	Let $\flowDiscrete$ be a discrete flow of a numerical scheme of order $\integratorOrder$ with step size $\stepsize$, 
	and denote by $L_{\flowDiscrete}$ a Lipschitz constant of it.
	Let $L_\NN$ be a Lipschitz constant of $\NN$ 
	and $\NNerror>0$ a scalar such that
	\begin{equation}
		\bigl\lVert r(\iterator,\SystemState_0,\parameter_0)
		-\stepsize^{\integratorOrder+1}\,\NN(\flowCont[\iterator\stepsize](\SystemState_0,\parameter_0),\parameter_0)\bigr\rVert
		\;<\;\NNerror\,\stepsize^{\integratorOrder+1},\quad k\in\setN,\;\SystemState_{0}\in\setStateTime,\;\parameter_0\in\setParam,
	\end{equation}
	with the local error $r$ defined in \eqref{eq:local_error}.
	
	Consider the error
	$$
	\integratorError_{\iterator+1} 
	\;=\;
	\flowDiscrete\bigl(\approximation{\SystemState}_{\iterator}\bigr)
	\;+\;\stepsize^{\integratorOrder+1}\,\NN\bigl(\approximation{\SystemState}_{\iterator},\parameter_0\bigr)
	\;-\;\flowCont[(\iterator+1)\stepsize](\SystemState_0,\parameter_0),
	$$
	with $\approximation{\SystemState}_{\iterator}$ defined by \eqref{eq:enhanced_explicit_num}.
	Then, the bound
	\begin{equation}
		\label{eq:error_bound}
		\norm{\integratorError_{\iterator+1}}
		\;\leq\;
		\begin{cases}
			\displaystyle
			\frac{\exp\!\Bigl((\iterator+1)\,\bigl(\alpha - 1\bigr)\Bigr) \;-\;1}
			{\alpha - 1}\,\NNerror \,\stepsize^{\integratorOrder+1},
			&\text{if } \alpha > 1,\\
			(\iterator+1)\,\NNerror \,\stepsize^{\integratorOrder+1},
			&\text{if } \alpha = 1,\\
			\dfrac{\NNerror \,\stepsize^{\integratorOrder+1}}{\,1-\alpha\,},
			&\text{if } 0 < \alpha < 1,
		\end{cases}
	\end{equation}
with $\alpha = L_{\flowDiscrete} \;+\;\stepsize^{\integratorOrder+1}L_\NN$, is satisfied for all $k\in\setN$, $\SystemState_{0}\in\setStateTime$, and $\parameter_0\in\setParam$. 
\end{prop}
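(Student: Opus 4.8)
The plan is to reduce the estimate to a scalar linear recursion in $\norm{\integratorError_{\iterator}}$ and resolve it, i.e.\ a discrete Gronwall argument. First I would observe that, by the definition \eqref{eq:enhanced_explicit_num} of the enhanced scheme, $\approximation{\SystemState}_{\iterator+1} = \flowDiscrete(\approximation{\SystemState}_{\iterator}) + \stepsize^{\integratorOrder+1}\NN(\approximation{\SystemState}_{\iterator},\parameter_0)$, so the quantity appearing in $\integratorError_{\iterator+1}$ is exactly $\approximation{\SystemState}_{\iterator+1}$ and hence $\integratorError_{\iterator} = \approximation{\SystemState}_{\iterator} - \flowCont[\iterator\stepsize](\SystemState_0,\parameter_0)$ for every $\iterator$; in particular $\integratorError_0 = \SystemState_0 - \SystemState_0 = 0$.

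Next I would insert into $\integratorError_{\iterator+1}$ the term $\flowDiscrete(\flowCont[\iterator\stepsize](\SystemState_0,\parameter_0)) + \stepsize^{\integratorOrder+1}\NN(\flowCont[\iterator\stepsize](\SystemState_0,\parameter_0),\parameter_0)$, once with a plus and once with a minus sign, and group the result into three parts: the difference $\flowDiscrete(\approximation{\SystemState}_{\iterator}) - \flowDiscrete(\flowCont[\iterator\stepsize](\SystemState_0,\parameter_0))$, the difference $\stepsize^{\integratorOrder+1}\bigl(\NN(\approximation{\SystemState}_{\iterator},\parameter_0) - \NN(\flowCont[\iterator\stepsize](\SystemState_0,\parameter_0),\parameter_0)\bigr)$, and the residual $\flowDiscrete(\flowCont[\iterator\stepsize](\SystemState_0,\parameter_0)) + \stepsize^{\integratorOrder+1}\NN(\flowCont[\iterator\stepsize](\SystemState_0,\parameter_0),\parameter_0) - \flowCont[(\iterator+1)\stepsize](\SystemState_0,\parameter_0)$. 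The decisive point is that by the definition \eqref{eq:local_error} of the local error this residual equals $-\bigl(r(\iterator,\SystemState_0,\parameter_0) - \stepsize^{\integratorOrder+1}\NN(\flowCont[\iterator\stepsize](\SystemState_0,\parameter_0),\parameter_0)\bigr)$, whose norm is $<\NNerror\,\stepsize^{\integratorOrder+1}$ by hypothesis. Bounding the first two parts with the Lipschitz constants $L_{\flowDiscrete}$ and $L_\NN$ and using $\integratorError_{\iterator} = \approximation{\SystemState}_{\iterator} - \flowCont[\iterator\stepsize](\SystemState_0,\parameter_0)$ then yields, via the triangle inequality, $\norm{\integratorError_{\iterator+1}} \leq \alpha\,\norm{\integratorError_{\iterator}} + \NNerror\,\stepsize^{\integratorOrder+1}$ with $\alpha = L_{\flowDiscrete} + \stepsize^{\integratorOrder+1}L_\NN$.

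Finally, I would solve this recursion starting from $\integratorError_0 = 0$ by a straightforward induction, obtaining $\norm{\integratorError_{\iterator+1}} \leq \NNerror\,\stepsize^{\integratorOrder+1}\sum_{j=0}^{\iterator}\alpha^{j}$, and then evaluate the geometric sum in the three regimes: it equals $\iterator+1$ when $\alpha=1$; it equals $\tfrac{1-\alpha^{\iterator+1}}{1-\alpha}$ and is $\leq \tfrac{1}{1-\alpha}$ when $0<\alpha<1$; and it equals $\tfrac{\alpha^{\iterator+1}-1}{\alpha-1}$ when $\alpha>1$, which I would then majorize using $\alpha\leq\e^{\alpha-1}$, hence $\alpha^{\iterator+1}\leq\e^{(\iterator+1)(\alpha-1)}$, to reach $\tfrac{\e^{(\iterator+1)(\alpha-1)}-1}{\alpha-1}$. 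Multiplying each case by $\NNerror\,\stepsize^{\integratorOrder+1}$ gives exactly \eqref{eq:error_bound}. I do not anticipate a genuine obstacle here, as this is a textbook discrete Gronwall estimate; the only points needing care are the bookkeeping of the add-and-subtract step — in particular recognizing that the local error \eqref{eq:local_error} is measured from the exact flow $\flowCont[\iterator\stepsize](\SystemState_0,\parameter_0)$ rather than from the numerical iterate, which is precisely why inserting $\flowDiscrete(\flowCont[\iterator\stepsize](\SystemState_0,\parameter_0))$ makes the hypothesis applicable — and the elementary passage from the geometric sum to the exponential bound in the case $\alpha>1$.
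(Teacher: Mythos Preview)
Your proposal is correct and follows essentially the same route as the paper: derive the linear recursion $\norm{\integratorError_{\iterator+1}}\le\alpha\norm{\integratorError_{\iterator}}+\NNerror\,\stepsize^{\integratorOrder+1}$ via Lipschitz continuity and the local-error hypothesis, unroll it into the geometric sum $\NNerror\,\stepsize^{\integratorOrder+1}\sum_{j=0}^{\iterator}\alpha^{j}$, and evaluate the three cases using $\alpha^{\iterator+1}\le\e^{(\iterator+1)(\alpha-1)}$ for $\alpha>1$. Your explicit add-and-subtract decomposition and the observation $\integratorError_0=0$ are in fact slightly more detailed than the paper's presentation, but the argument is the same.
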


\begin{proof}
	Let for notational simplicity $\SystemState_{k}\coloneqq\flowCont[\iterator+1\stepsize](\SystemState_0,\parameter_0)$. 
	Then from the hypothesis on $\NN$, for $k=0$ we have
	\begin{equation}
		\norm{\integratorError_1}
		\;=\;
		\norm{\flowDiscrete( \SystemState_0) 
			\;+\;\stepsize^{\integratorOrder+1}\,\NN(\SystemState_0,\parameter_0) 
			\;-\;\SystemState_1}
		\;<\;
		\NNerror\,\stepsize^{\integratorOrder+1}
		\;\eqqcolon\;\beta.
	\end{equation}
	Using the Lipschitz continuity of both $\NN$ and the discrete flow $\flowDiscrete$, one obtains 
	for any $k \in \mathbb{N}$,
	\begin{equation}
		\norm{\integratorError_{k+1}}
		\;\le\;
		\NNerror\,\stepsize^{\integratorOrder+1}
		\;+\;\Bigl(L_{\flowDiscrete} \;+\;\stepsize^{\integratorOrder+1}L_\NN\Bigr)
		\,\norm{\integratorError_k}
		\;=\;
		\beta \;+\;\alpha\,\norm{\integratorError_k}.
	\end{equation}
	This implies the linear recurrence
	\begin{equation}
		\norm{\integratorError_{k+1}}
		\;\le\;
		\alpha\,\norm{\integratorError_k}
		\;+\;\beta.
	\end{equation}
	By standard induction (unrolling this recurrence), we get
	\begin{equation}
		\norm{\integratorError_{k+1}}
		\;\le\;
		\beta \,\sum_{j=0}^k \alpha^j.
	\end{equation}
	We now distinguish three cases:
	\begin{enumerate}
		\item[(i)] 
		$\alpha > 1$:
		Recalling that	$\alpha^{k+1} \,\le\, e^{(k+1)(\alpha-1)}$  yields
		\begin{equation}
			\sum_{j=0}^k \alpha^j
			\;=\;
			\frac{\alpha^{k+1}-1}{\alpha-1}
			\;\le\;
			\frac{\exp\!\bigl((k+1)(\alpha-1)\bigr)\;-\;1}{\alpha-1}.
		\end{equation}
		Thus,
		\begin{equation}
			\norm{\integratorError_{k+1}}
			\;\le\;
			\beta \;\frac{\exp\!\bigl((k+1)(\alpha-1)\bigr) -1}{\alpha-1},
		\end{equation}
		as in the first line of \eqref{eq:error_bound}.
		
		\item[(ii)]
		$\alpha = 1$: 
		The geometric sum simply becomes $\sum_{j=0}^k 1 = k+1$.  
		Hence
		\begin{equation}
			\norm{\integratorError_{k+1}}
			\;\le\;
			\beta\,(\,k+1\,),
		\end{equation}
		which is the second line of the piecewise bound.
		
		\item[(iii)]
		$0 < \alpha < 1$:
		Now $\alpha-1 < 0$.  The closed‐form sum is
		\begin{equation}
			\sum_{j=0}^k \alpha^j
			\;=\;
			\frac{1 - \alpha^{k+1}}{\,1-\alpha\,}
			\;\le\;
			\frac{1}{\,1-\alpha\,},
		\end{equation}
		because $0 < \alpha^{k+1}<1$.  Thus
		\begin{equation}
			\norm{\integratorError_{k+1}}
			\;\le\;
			\beta \,\sum_{j=0}^k \alpha^j
			\;\le\;
			\frac{\beta}{\,1-\alpha\,}.
		\end{equation}
		This is exactly the third line of \eqref{eq:error_bound}, 
		and gives a uniform (non‐growing) error bound in this stable regime.
	\end{enumerate}
\end{proof}

Building on the discussions in \cite[Lemma~3.5]{hairer2008}, the adoption of Lipschitz continuity for explicit Runge-Kutta (RK) schemes is justified. A comparison of the error bound in \eqref{eq:error_bound} with those for RK schemes' global error (see \cite[Theorem~3.6]{hairer2008}) reveals that 
thriving for a minimal $\NNerror$ while keeping step size $\stepsize$ fixed, akin to conventional solvers, can significantly minimize the error. Alternatively, reducing $\NNerror$ enables a larger $\stepsize$ without deteriorating accuracy.

For applications, large step sizes decrease computational demands.
Thus, the NN-enhanced approach pays off when the evaluation of the vector field is more time-intensive than vector-matrix multiplications and additions in $\NN$. In Section \ref{sec:numerical_results}, it is shown that enhanced solvers can significantly reduce the computational burden while keeping the numerical accuracy comparable to that of traditional approaches for the considered dynamical system. This can be seen in Figure \ref{fig:comparison_results_median_Error_vs_time} by comparing the results for a given error, i.e., %following horizontal lines
comparing the different solutions on horizontal lines of constant accuracy.

The following proposition demonstrates that for a time-invariant differential equation, the enhanced solvers satisfy a modified continuous-time differential equation. This corresponds to backward error analysis an idea that dates back to the work \cite{wilkinson1960error} and is important when the focus is on the qualitative behavior of numerical methods and on making statements valid over very long time intervals.
For simplicity, the dependence on parameters $\parameter$ is omitted.

\begin{prop}
	\label{prop:BEA_Enhanced}
	Consider the sequence ${\approximation{\SystemState}_{k+1}=\flowDiscrete(\approximation{\SystemState}_{k})}$ for $k \geq 0$, where $\flowDiscrete$ is the discrete flow of a consistant integrator of order $p$, i.e., satisfying
	\begin{equation}
		\flowDiscrete(\approximation{\SystemState}_{k})=\flowCont(\approximation{\SystemState}_{k})+\stepsize^{p+1}\delta(\approximation{\SystemState}_{k})+\mathcal{O}\left(\stepsize^{p+2}\right),\label{eq:leading_term_local_error}
	\end{equation}
	where $\flowCont$ denotes the exact flow of ${\dot{\SystemState}=f(\SystemState)}$ and $\stepsize^{p+1}\delta(\approximation{\SystemState}_{k})$ is the leading term of the local truncation error. Consider a neural network $\NN: \SystemState \mapsto \NN(\SystemState)$. An enhanced integrator with discrete flow $\SystemState \mapsto \flowDiscrete(\SystemState) + \stepsize^{p+1}\NN(\SystemState)$ satisfies the modified differential equation
	\begin{equation}
		\dot{\tilde{\SystemState}}=f(\tilde{\SystemState})+\stepsize^{p+1}\left(\delta(\tilde{\SystemState})+\NN(\tilde{\SystemState})\right)+\stepsize^{p+2}f_{p+2}(\SystemState)+\hdots,\label{eq:modified_equation}
	\end{equation}
	where ${\tilde{\SystemState}(0) = \SystemState(0)}$ and the function $f_{p+2}$ depends on $f$ and its derivatives.
\end{prop}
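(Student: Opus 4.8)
The plan is to follow the standard construction of the modified equation in backward error analysis (as in, e.g., \cite[Ch.~IX]{hairer2008} or \cite{hairer2002b}), but applied to the \emph{enhanced} discrete flow $\Phi_h(\SystemState) := \flowDiscrete(\SystemState) + \stepsize^{p+1}\NN(\SystemState)$. First I would postulate that the numerical trajectory $\{\approximation{\SystemState}_k\}$ is interpolated by the exact flow of a modified vector field written as a formal power series in $\stepsize$,
\begin{equation}
	\dot{\tilde{\SystemState}} = f(\tilde{\SystemState}) + \stepsize\, f_2(\tilde{\SystemState}) + \stepsize^2 f_3(\tilde{\SystemState}) + \cdots,
\end{equation}
and then determine the coefficient functions $f_j$ recursively by matching the Taylor expansion in $\stepsize$ of the time-$\stepsize$ flow of this modified equation against the expansion of $\Phi_h$. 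Concretely, if $\psi_{\stepsize}$ denotes the time-$\stepsize$ flow of the modified equation, a Lie–Taylor (or Gr\"obner) expansion gives $\psi_{\stepsize}(\SystemState) = \SystemState + \stepsize f(\SystemState) + \tfrac{\stepsize^2}{2}\big(f'(\SystemState)f(\SystemState) + 2 f_2(\SystemState)\big) + \cdots$, and requiring $\psi_{\stepsize} = \Phi_h$ order by order in $\stepsize$ yields each $f_{j+1}$ uniquely as a polynomial expression in $f$, its derivatives, and the previously determined $f_2,\dots,f_j$ — with, crucially, an extra contribution from the correction term $\stepsize^{p+1}\NN$ entering first at order $\stepsize^{p+1}$.

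The key step is to compare this with the same construction for the \emph{uncorrected} scheme $\flowDiscrete$. By consistency of order $p$, equation \eqref{eq:leading_term_local_error} tells us that for $\flowDiscrete$ alone the modified equation reads $\dot{\SystemState} = f(\SystemState) + \stepsize^{p+1}\delta(\SystemState) + \mathcal{O}(\stepsize^{p+2})$, i.e.\ $f_2 = \cdots = f_{p+1} = 0$ and the first nonzero correction is $f_{p+2} = \delta$ (after re-indexing the series so the $\stepsize^{p+1}$ coefficient is called $\delta$). Now since $\Phi_h$ differs from $\flowDiscrete$ only by the additive term $\stepsize^{p+1}\NN(\SystemState)$, the order-by-order matching is \emph{identical} for all powers $\stepsize^0,\dots,\stepsize^p$ — so $f_2 = \cdots = f_{p+1} = 0$ still holds — and at order $\stepsize^{p+1}$ the matching equation picks up exactly the extra term $\NN(\SystemState)$ on the right-hand side. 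Because at this lowest nontrivial order the flow expansion of the modified equation contributes the coefficient $f$-independent-of-lower-corrections term linearly (the leading coefficient is simply $+\,(\text{new } f_{p+2})$, all cross terms involving $f_2,\dots,f_{p+1}$ vanishing since those are zero), one concludes that the new coefficient at order $\stepsize^{p+1}$ is $\delta(\tilde{\SystemState}) + \NN(\tilde{\SystemState})$. Higher-order coefficients $f_{p+2}, f_{p+3}, \dots$ change as well, but each still depends only on $f$, its derivatives, and $\NN$, its derivatives; absorbing these into the ``$+\cdots$'' tail and relabelling gives precisely \eqref{eq:modified_equation}. The initial condition $\tilde{\SystemState}(0) = \SystemState(0)$ is imposed by construction.

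I would keep the argument at the level of formal power series, as is standard in this context, and explicitly note (perhaps in a remark) that the series need not converge; truncation after finitely many terms yields an exponentially small defect over $\mathcal{O}(1)$ time intervals under analyticity assumptions, but this is not needed for the statement as phrased. The main obstacle is bookkeeping: one must argue cleanly that adding $\stepsize^{p+1}\NN$ does \emph{not} disturb the vanishing of the intermediate coefficients $f_2,\dots,f_{p+1}$ and enters the $\stepsize^{p+1}$ coefficient purely additively — this rests on the observation that in the Lie–Taylor expansion of $\psi_{\stepsize}$, the coefficient of $\stepsize^{p+1}$ equals $f_{p+2}$ plus terms built solely from $f, f_2, \dots, f_{p+1}$, all of which are zero here. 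Everything else is a routine, if tedious, expansion that I would not carry out in full.
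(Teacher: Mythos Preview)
Your proposal is correct and takes essentially the same approach as the paper: the paper's proof consists of a single sentence citing the standard backward-error-analysis construction in \cite[Ch.~IX.1]{hairer2002b}, and what you have written is precisely a sketch of that construction applied to the enhanced discrete flow $\flowDiscrete+\stepsize^{p+1}\NN$. In other words, you have unpacked the reference the paper merely points to.
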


\begin{proof}
	The proof follows immediately from the results in \cite[Ch.~IX.1]{hairer2002b}.
\end{proof}

For a neural network $\NN$ for which the loss function \eqref{eq:NN_loss} is of order $\mathcal{O}\left(h^{p+2}\right)$, the perturbation term of order $h^{p+1}$ in the modified equation \eqref{eq:modified_equation} can be neglected. Furthermore, even if the neural network does not exactly cancel the leading error term but approximates it with an accuracy of $\mathcal{O}\left(h^{p+2}\right)$, the overall error constant is reduced compared to the base integrator with discrete flow $\flowDiscrete$. This systematic improvement demonstrates that the quality of the neural network correction directly governs the effective order of the numerical method. In other words, the enhanced integrator can be interpreted as the exact flow of a perturbed system whose deviation from the original dynamics is of order $\mathcal{O}(h^{p+2})$. Consequently, any long-term error bounds or qualitative properties (such as stability and the preservation of invariants) that hold for the continuous system are inherited by the enhanced integrator, up to errors of order $\mathcal{O}(h^{p+2})$. In particular, if the continuous system preserves a given invariant or exhibits a specific stability behavior, the enhanced integrator will approximately preserve these properties with deviations that remain controlled over long integration times.

%===============================================================================
% Enhanced Integrators with safety filters
%===============================================================================
\subsection{Enhanced integrators with mitigated generalization risk}
\label{sec:safety_net}
Numerous numerical integration algorithms that adapt the step size to achieve a prescribed tolerance for the local error have been developed (see, e.g., \cite[Ch.~II.4]{hairer2008}) and are part of most modern numerical computing toolboxes. The core idea of these approaches is the use of two Runge--Kutta schemes of order $p$ and $p+1$, respectively, to compute (at least asymptotically) an estimate for the local error. If the error is not within prescribed bounds, the step size is adjusted to ensure both computational efficiency and desired tolerances. This idea is used in the following to mitigate the risk of large NN prediction errors and to only trust the NN if its prediction roughly agrees with prescribed tolerances.

Consider in the following two numerical flows ${}_{p+1}\flowDiscrete$ and ${}_p\flowDiscrete$ for two integrators of orders ${p+1}$ and $p$, respectively.

\subsubsection{Local error approximation:}
An asymptotic estimate for the local error of the less precise method is
\begin{equation}
	{}_{p}\boldsymbol{\epsilon}(\SystemState_k) = {}_{p+1}\flowDiscrete(\SystemState_k) - {}_{p}\flowDiscrete(\SystemState_k).
	\label{eq:error_p_p+1}
\end{equation}
Denote by \( (\SystemState_{k})_i \) the value of the \( i \)-th component of $\SystemState_{k}$. As in automatic step size control algorithms, define a scaling factor
\begin{equation}
	\text{sc}_i
	\;=\;
	\mathrm{Atol}_i
	\;+\;
	\max\bigl(|(\SystemState_{k})_i|,\;|({}_{p+1}\flowDiscrete(\SystemState_{k}))_i|\bigr)\,\mathrm{Rtol}_i,
	\label{eq:scaling}
\end{equation}
which accounts for absolute and relative tolerances prescribed by $\mathrm{Atol}_i$ and $\mathrm{Rtol}_i$, respectively. Relative errors are considered for $\mathrm{Atol}_i=0$ and absolute errors for $\mathrm{Rtol}_i=0$.

\subsubsection{Mitigating the generalization risk of neural networks:}
Let $\NN$ be a neural network with parameters solving \eqref{eq:NN_Training}, trained to enhance the solver corresponding to ${}_p\flowDiscrete$. The neural network aims to approximate the local truncation error of the classical integrator of order $p$.

The discrepancy between the true local error and the neural network prediction can be assessed by considering
\begin{equation}
	\boldsymbol{\delta}(\SystemState_{k},\parameter_0) = {}_{p}\boldsymbol{\epsilon}(\SystemState_k) - \stepsize^{\integratorOrder+1}\NN(\SystemState_{k},\parameter_0).
\end{equation}
To ensure the reliability of the network prediction during generalization, we introduce the normalized error
\begin{equation}
	\tilde{\boldsymbol{\delta}}(\SystemState_{k},\parameter_0) = \left( \tfrac{( \boldsymbol{\delta}(\SystemState_{k},\parameter_0) )_i}{\text{sc}_i} \right)_{i=1}^n,
\end{equation}
where $\text{sc}_i$ denotes a component-wise scaling factor \eqref{eq:scaling}.

Let $\norm{\cdot}$ denote an arbitrary vector norm. The training dataset $\NNtrainingDataIn$ used in \eqref{eq:NN_Training} is employed to define a threshold
\begin{equation}
	\boldsymbol{\delta}_{\text{max}} = \kappa \max_{(k,\SystemState_0,\parameter_0) \in \NNtrainingDataIn} \norm{ \tilde{\boldsymbol{\delta}}(k,\SystemState_0,\parameter_0) }, \qquad \kappa \geq 1,
\end{equation}
where $\kappa$ is a safety factor introduced to account for uncertainty in unseen data.

The quantity \( \norm{\tilde{\boldsymbol{\delta}}(k,\SystemState_0,\parameter_0)} \) then serves as an a posteriori indicator for mitigating the generalization risk of the neural network. Specifically, if
\begin{equation}
	\norm{\tilde{\boldsymbol{\delta}}(k,\SystemState_0,\parameter_0)} \leq \boldsymbol{\delta}_{\text{max}},
	\label{eq:condition}
\end{equation}
the enhanced solver (informed by the network prediction) is employed; otherwise, the algorithm reverts to the classical integrator of order \( p+1 \), ensuring conservative behavior outside the trained regime. The complete strategy for solving an initial value problem using this approach is summarized in Algorithm \ref{alg:safety_net}. These solvers are called \emph{hybrid enhanced solvers} in the sequel.

\begin{algorithm}
	\caption{Enhanced integrators with mitigated generalization risk}
	\label{alg:safety_net}
	\begin{algorithmic}[1]
		\Require Initial state $\SystemState_0$, parameters $\parameter_0$, step size $\stepsize$, tolerance parameters $\mathrm{Atol}_i$, $\mathrm{Rtol}_i$, threshold $\boldsymbol{\delta}_{\text{max}}$, $N$ number of steps, trained $\NN$
		\State $\approximation{\SystemState}_{0}\gets \SystemState_0$
		\For{$k = 0$ to $N-1$}
		\State ${}_p\SystemState_{k+1} \gets {}_p\flowDiscrete( \approximation{\SystemState}_{k})$
		\State ${}_{p+1}\SystemState_{k+1} \gets {}_{p+1}\flowDiscrete( \approximation{\SystemState}_{k})$
		\State ${}_p\boldsymbol{\epsilon}(\approximation{\SystemState}_{k}) \gets {}_{p+1}\SystemState_{k+1} - {}_p\SystemState_{k+1}$
		\State $\boldsymbol{\delta}(k, \approximation{\SystemState}_{k}, \parameter_0) \gets {}_p\boldsymbol{\epsilon}(\approximation{\SystemState}_{k}) - \NN( \approximation{\SystemState}_{k}, \parameter_0)$ 
		\State $\tilde{\boldsymbol{\delta}}(k,\approximation{\SystemState}_{k},\parameter_0) \gets \left(\tfrac{(\boldsymbol{\delta}(k,\approximation{\SystemState}_{k},\parameter_0))_i}{\text{sc}_i}\right)_{i=1}^n$
		\If{$\norm{\tilde{\boldsymbol{\delta}}(k,\approximation{\SystemState}_{k},\parameter_0)} \leq \boldsymbol{\delta}_{\text{max}}$}
		\State $\approximation{\SystemState}_{k+1} \gets \approximation{\SystemState}_{k} + \stepsize^{\integratorOrder+1}\NN( \approximation{\SystemState}_{k}, \parameter_0)$
		\Else
		\State $\approximation{\SystemState}_{k+1} \gets {}_{p+1}\SystemState_{k+1}$
		\EndIf
		\EndFor
		\State \Return $\approximation{\SystemState}_{k}$ for $k\in\{0,\hdots, N\}$
	\end{algorithmic}
\end{algorithm}

\begin{rem}
	Evaluating condition \eqref{eq:condition} for the maximum norm ensures that the discrepancy is bounded component-wise by
	\[
	\abs{( \boldsymbol{\delta}(k,\SystemState_{k},\parameter_0) )_i} \leq \boldsymbol{\delta}_{\text{max}} \cdot \text{sc}_i \quad \text{for all } i=1,\dots,n.
	\]
	This guarantees that the error in each component remains within the scaled tolerance.
	
	For the averaged \(\ell^2\) norm, defined as
	\[
	\norm{\tilde{\boldsymbol{\delta}}}_2 = \left( \frac{1}{n} \sum_{i=1}^n \left( (\tilde{\boldsymbol{\delta}})_i \right)^2 \right)^{1/2},
	\]
	the condition \( \norm{\tilde{\boldsymbol{\delta}}}_2 \leq \boldsymbol{\delta}_{\text{max}} \) implies that the mean squared relative discrepancy satisfies
	\[
	\frac{1}{n} \sum_{i=1}^n \left( \tfrac{( \boldsymbol{\delta}(k,\SystemState_{k},\parameter_0) )_i}{\text{sc}_i} \right)^2 \leq \boldsymbol{\delta}_{\text{max}}^2.
	\]
	This allows for individual components to exceed their scaled tolerance, as long as the overall averaged deviation remains sufficiently small.
\end{rem}

\subsubsection{Efficient implementation of hybrid enhanced solvers}

Embedded Runge--Kutta formulas are a classical tool to obtain multiple approximations of different orders using a shared set of stage evaluations. Such constructions allow for the efficient implementation of higher-order methods by reusing intermediate computations from lower-order schemes. In particular, given a method of order \( p \), an embedded formula of order \( \hat{p} < p \) can often be constructed using the same set of stage values or a subset thereof; see \cite[Section II.4]{hairer2008} and \cite{DormandPrince1980}. These methods can be used to implement the hybrid solvers summarized in Algorithm  \ref{alg:safety_net} by using a minimal number of further vector field evaluations. This is illustrated in the following example.

\begin{exmp}
	\label{exmp:embedded_heun}
	A three-stage third-order Runge--Kutta method can be constructed by combining evaluations at the initial point, a forward Euler step from Example \ref{exmp:Euler}, and a midpoint evaluation using a trapezoidal predictor from Example \ref{exmp:Heun} (see also, e.g., \cite{hairer2008}). The numerical scheme is defined as follows:
	\begin{align}
		k_1(\SystemState) &= f(\SystemState, \parameter_0), \\
		k_2(\SystemState) &= f(\SystemState + \stepsize k_1(\SystemState), \parameter_0), \\
		k_3(\SystemState) &= f\left(\SystemState + \frac{\stepsize}{4}(k_1(\SystemState) + k_2(\SystemState)), \parameter_0\right), \\
		\flowDiscrete( \SystemState) &= \SystemState + \stepsize\left(\frac{1}{6}k_1(\SystemState) + \frac{1}{6}k_2(\SystemState) + \frac{2}{3}k_3(\SystemState)\right).
	\end{align}
	
	For time-invariant systems, i.e.\ if %where
    the vector field does not depend explicitly on time, this scheme can be implemented efficiently by reusing computations from a Heun update. Specifically, the evaluations
	\[
	k_1(\SystemState) = f(\SystemState, \parameter_0), \quad k_2(\SystemState) = f(\SystemState + \stepsize k_1(\SystemState), \parameter_0)
	\]
	are already required for Heun's method. To upgrade to the third-order Runge--Kutta method, only one additional evaluation,
	\[
	k_3(\SystemState) = f\left(\SystemState + \tfrac{\stepsize}{4}(k_1(\SystemState) + k_2(\SystemState)), \parameter_0\right),
	\]
	is needed. This makes the higher-order method particularly attractive in contexts where intermediate values from simpler schemes are available or reused across solvers.
\end{exmp}

\section{Numerical results}
\label{sec:numerical_results}
Fatigue analysis is a fundamental part of wind turbine design, required to ensure structural integrity over long operational lifetimes. Due to the stochastic nature of wind conditions, including turbulence, shear, and wake interactions, comprehensive fatigue evaluation requires extensive simulations in a wide range of environmental and operational parameters \cite{Shaler2023,Ding2024,Liu2023}.
These simulations quantify fatigue damage by accounting for cyclic loading variations, which are critical in components such as blades and tower structures. Recent studies highlight the importance of parameter sensitivity \cite{Shaler2023}, probabilistic modeling under coupled wind wave conditions \cite{Ding2024}, and surrogate models to reduce computational cost \cite{Liu2023}. 
In practice, thousands of simulations are often necessary to capture the range of possible loading scenarios, making automated model generation and high-performance computing essential for fatigue-driven design optimization \cite{Schafhirt2022}. Thus, a good accuracy vs efficiency trade-off is crucial.

\subsection{Wind Turbine Model T2B1i1cG and problem statement}

The wind turbine model \texttt{T2B1i1cG}, developed as part of the CADynTurb framework \cite{Geisler2021,Geisler2021a,Geisler2024,Branlard2021}, represents a nonlinear, multi-degree-of-freedom system that captures key structural and electromechanical dynamics of a horizontal-axis wind turbine. The model identifier encodes its configuration: the prefix \texttt{T2} denotes a two-bladed rotor, while \texttt{B1} signifies the inclusion of one primary structural body, typically representing the flexible tower. The tag \texttt{i1} implies one internal degree of freedom associated with drivetrain dynamics, and \texttt{cG} indicates that generator-side dynamics are explicitly modeled.

This model comprises $n=8$ generalized coordinates, each associated with a degree of freedom and its corresponding generalized velocity. Let ${\mathbf{q} \in \mathbb{R}^8}$ denote the vector of generalized coordinates, given by
\begin{equation}
	\mathbf{q} = \begin{pmatrix}
		x_{\mathrm{FA}} & x_{\mathrm{SS}} & \theta_{1,\mathrm{fl}} & \theta_{2,\mathrm{fl}} & \theta_{3,\mathrm{fl}} & \theta_{\mathrm{edg}} & \phi_{\mathrm{rot}} & \phi_{\mathrm{gen}}
	\end{pmatrix}^\top,
\end{equation}
where $x_{\mathrm{FA}}$ and $x_{\mathrm{SS}}$ represent the tower-top displacements in the fore-aft and side-to-side directions, respectively. The variables $\theta_{1,\mathrm{fl}}, \theta_{2,\mathrm{fl}}, \theta_{3,\mathrm{fl}}$ correspond to the flapwise bending modes of the three rotor blades. The quantity $\theta_{\mathrm{edg}}$ captures the collective edgewise bending of the rotor. Finally, $\phi_{\mathrm{rot}}$ denotes the rotor azimuth angle, while $\phi_{\mathrm{gen}}$ represents the azimuthal position of the generator shaft.

\subsubsection{Equations of Motion}

The model equations are derived using Lagrangian mechanics. The dynamics follow from the Euler–Lagrange equations
\begin{equation}
	\frac{d}{dt} \left( \frac{\partial \mathcal{L}}{\partial \dot{q}_i} \right) - \frac{\partial \mathcal{L}}{\partial q_i} = Q_i, \quad i = 1, \dots, 8,
\end{equation}
where the Lagrangian ${\mathcal{L} = T - V}$ is defined as the difference between the total kinetic energy $T$ and the total potential energy $V$ of the system. The terms $Q_i$ represent the generalized non-conservative forces, which include contributions from aerodynamic loads, structural damping, and applied torques (e.g., generator torque).

In compact matrix form, the equations of motion can be expressed as
\begin{equation}
	\mathbf{M}(\mathbf{q}) \ddot{\mathbf{q}} + \mathbf{C}(\mathbf{q}, \dot{\mathbf{q}}) \dot{\mathbf{q}} + \mathbf{K}(\mathbf{q}) = \mathbf{F}_{\mathrm{ext}}(\mathbf{q}, \dot{\mathbf{q}},\parameter_0),
	\label{eq:model_turbine}
\end{equation}
where ${\mathbf{M}(\mathbf{q}) \in \mathbb{R}^{8 \times 8}}$ is the generalized mass matrix, $\mathbf{C}(\mathbf{q}, \dot{\mathbf{q}})$ contains Coriolis and gyroscopic coupling terms, and $\mathbf{K}(\mathbf{q})$ includes conservative forces such as gravitational and elastic restoring forces. The right-hand side vector $\mathbf{F}_{\mathrm{ext}}$ encompasses external inputs and excitations, including aerodynamic forces. Wind parameters are summarized in $\parameter_0\in\setR^{3}$ and contain wind speed, vertical, and horizontal shear. The initial conditions are denoted $\mathbf{q}(0)=\mathbf{q}_0$ and $\dot{\mathbf{q}}(0)=\dot{\mathbf{q}}_0$ for the generalized coordinates and velocities, respectively.

In the following, we are interested in the solutions of \eqref{eq:model_turbine} for $t \in [0, 20] \si{\second}$ under different constant values of the incoming wind speed. The horizontal and vertical wind shear parameters are set to zero for simplicity.  Enhanced solvers with different step size will be considerd

\subsubsection{Wind Turbine Model Parameterization}

To ensure the physical realism and reproducibility of simulation results, all parameters used in the \texttt{T2B1i1cG} model are derived from openly available reference data sets. In particular, the structural and aerodynamic properties correspond to a modified version of the well-established NREL 5-MW reference turbine \cite{Jonkman2009}, which serves as a widely accepted baseline in academic and industrial wind turbine studies.

These parameters include mass and stiffness distributions for the blades and tower, aerodynamic airfoil data, drivetrain inertias, and generator properties. The CADynTurb framework programmatically extracts and utilizes these properties during the symbolic generation of the equations of motion, which promotes consistency and minimizes user-induced variability.

The full list of parameters, along with their numerical values and units, is provided in the associated model directory within the CADynTurb repository. This enables full reproducibility and facilitates the comparison of results across different studies or software environments.

\subsubsection{Model background}

The \texttt{T2B1i1cG} model offers a balance between physical fidelity and computational efficiency. It is particularly well-suited for applications in dynamic simulation, observer design, and model-based control synthesis. Owing to its structured derivation and modular implementation, the model can be exported to code generation frameworks such as \texttt{acados}, or integrated into simulation environments including MATLAB/Simulink and Python-based platforms. Its reduced-order nature makes it ideal for high-speed simulation and real-time estimation tasks while retaining essential structural and electromechanical dynamics. Different models have been considered in \cite{Geisler2024} for the design of extended Kalman filters.
\subsection{Data generation}
The reference data for training the neural network using the optimization problem \eqref{eq:NN_Training} is generated using a seventh-order Runge-Kutta scheme (see, e.g., \cite{hairer2008}) with a constant step size of ${\stepsize_{\text{ref}} = 10^{-4} \si{\second}}$. The initial condition is fixed, with all state components set to zero except for $\phi_{\mathrm{rot}}(0) = \frac{\pi}{3} \si{\radian}$. The solution approximated by this approach is denoted in the following by $\SystemState^{\text{ref.}}$.
Figure \ref{fig:comparison_results_state_evolution} shows exemplarily the time evolution of the state component  $x_{\mathrm{FA}}$  for three different wind speeds.
\begin{figure}[]
	\centering
	\def\svgwidth{0.9\textwidth}
	\graphicspath{{figures/results_far_from_training_data/}}
	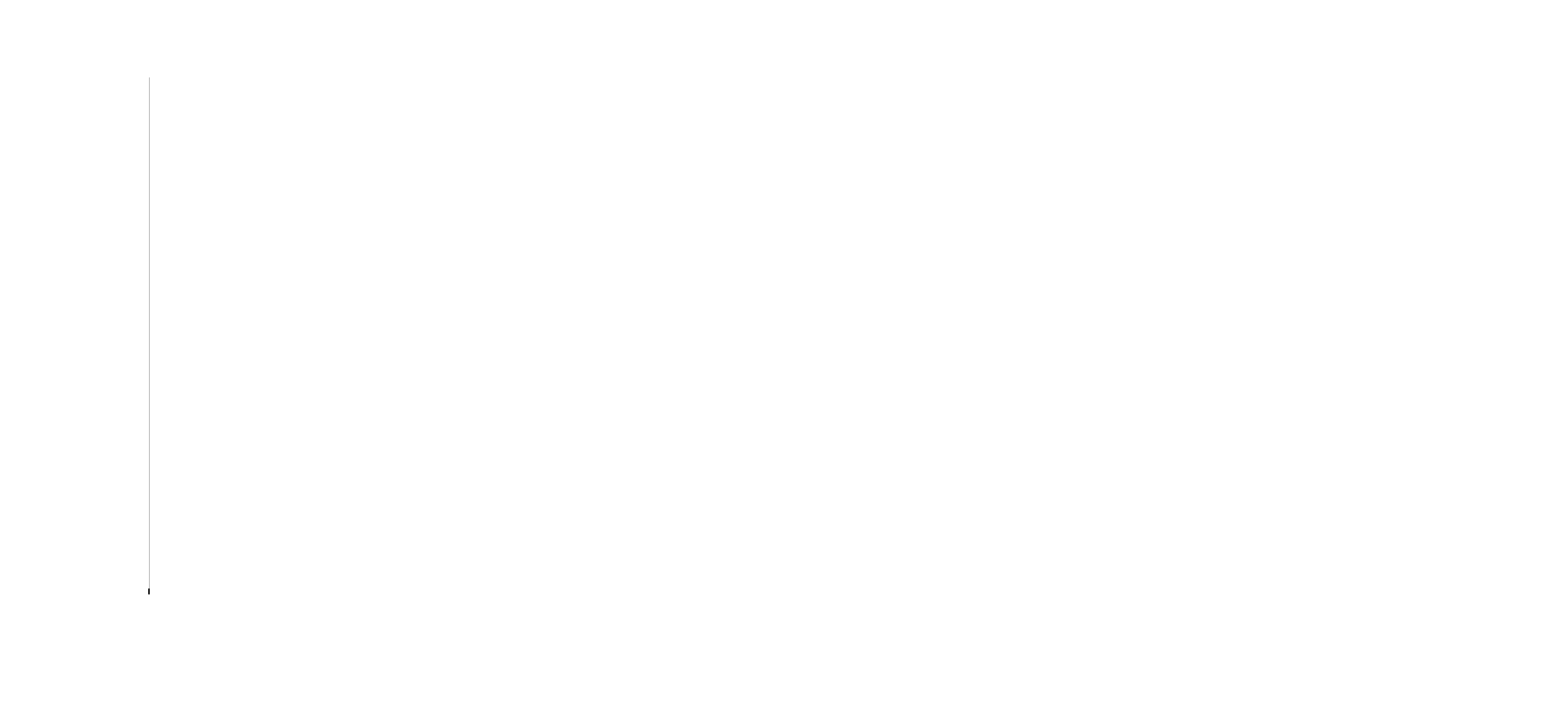
	\caption{Time evolution of the state component $\SystemState_{\mathrm{FA}}$ for three different wind speeds $v_{\text{wind}}$.}
	\label{fig:comparison_results_state_evolution}
\end{figure}
\subsection{Network training}
A neural network $\NN$ with three hidden layers  each containing $111$ neurons, and using the rectified linear unit as the activation function
 is employed,  i.e., 
 \begin{equation}
 	L=3, \quad 
 	\NNactivation(x) = \begin{cases}
 		x, & \text{if } x \geq 0, \\
 		0, & \text{if } x < 0.
 	\end{cases}
 \end{equation}
The input layer has $\NNdimOut=16+1=17$, $16$ entries for the state components and one entry for the wind speed,  and the output is of shape $\NNdimOut=16$. The implementation utilizes PyTorch\footnote{Version 2.5.1 is used.} \cite{NEURIPS2019_9015}   and PyTorch Lightning\footnote{Version 2.5.1 is used.} \cite{Lightning}.

 The learning is performed with six constant wind speeds ${\setParam=\{6.1, 6.7, 7.8, 8.3, 8.8, 9.4\} \si{\meter\per\second}}$. Specifically, \eqref{eq:model_turbine} is solved for these wind parameters, and the resulting approximated trajectories of the state are used as training data.
The full solution trajectories computed using a high-order reference integrator with step size $\stepsize_{\text{ref}}$ are used to construct the training dataset. For notational simplicity, let $\SystemState^{\text{ref.}}_\iterator$ denote the state at time $\iterator \tilde{\stepsize}$, where $\tilde{\stepsize}$ is the step size of the integrator to be enhanced. Then, define the set of time-stamped states as $\setStateTime = \{ \SystemState^{\text{ref.}}_\iterator \mid \iterator \in \mathcal{K}\}$, where $\mathcal{K} = \{0, \dots, N_k\}$ and $N_k\in\setN$ is such that $N_k \tilde{\stepsize} = t_{\text{end}}$.
The corresponding training dataset is given by $\NNtrainingDataIn = \{(\iterator, \SystemState^{\text{ref.}}_\iterator, \parameter) \in \mathcal{K} \times \setStateTime \times \setParam\}$.
The cost function \eqref{eq:NN_loss}, evaluated at a given state $\SystemState^{\text{ref.}}_\iterator$, wind speed $\parameter$, is defined as
\begin{equation}
	\NNloss_{\tilde{\NNparam}}(\iterator, 0, \SystemState^{\text{ref.}}_\iterator, \parameter)
	= \left\lVert \NN( \SystemState^{\text{ref.}}_\iterator, \parameter)
	- \tfrac{1}{\tilde{\stepsize}^{\integratorOrder+1}}\left(\SystemState^{\text{ref.}}_{\iterator+1} - \flowDiscrete[\tilde{\stepsize}]( \SystemState^{\text{ref.}}_\iterator)\right) \right\rVert^2.
	\label{eq:NN_loss_wind}
\end{equation}

The network is trained using mini-batch gradient descent, where the training data is divided into batches of fixed size to improve memory efficiency and training stability. Batch processing enables faster convergence and smoother optimization dynamics, especially in the presence of noisy gradients. The AdamW optimizer \cite{Loshchilov2019} is used to solve \eqref{eq:NN_OP}.

Data generated using the wind speeds $\{5, 7.2\} \si{\meter\per\second}$ are used for validating the prediction of the local error and tuning the \texttt{ReduceLROnPlateau} method in PyTorch Lightning. This scheduler reduces the learning rate when the loss function \eqref{eq:NN_loss} evaluated on the validation dataset ceases to improve. If no improvement is observed for a specified number of epochs, the learning rate is reduced by a predefined factor. This mechanism facilitates finer control over the learning process and helps to mitigate overfitting.
The hyperparameter optimization framework Optuna \cite{Akiba2019} is employed to identify optimal values for the batch size, learning rate, and scheduler parameters, thereby enhancing training efficiency and predictive performance. The results discussed in the next chapter are derived using the models trained using the best hyperparameters.

To ensure that the correction learned by the neural network is applied consistently across all components of the system state, both the training and validation data are scaled independently. Specifically, each state component is linearly mapped from its minimum and maximum values to the interval $[0,1]$, ensuring a uniform input and output range across all components. 

To preserve this scaling during inference and to integrate it seamlessly into the network architecture, two additional fixed linear layers are appended—one at the input and one at the output of the network. These layers apply the inverse and forward scaling transformations, respectively, allowing the neural network to operate entirely in the normalized space while maintaining compatibility with the original physical scale of the data. The loss function \eqref{eq:NN_loss} is evaluated on the scaled data, promoting numerical stability and consistent training dynamics across all state dimensions.

\subsection{Comparison of numerical schemes}
In the following, let \( n_{\textsc{RK}} \) and \( n_{\NN} \) denote the number of evaluations of the vector field \( f \) from \eqref{eq:ODE}, and the number of evaluations of a neural network \( \NN \), respectively, that are required by a numerical integration scheme. For instance, the classical Heun method requires \( n_{\textsc{RK}} = 2 \) vector field evaluations and \( n_{\NN} = 0 \) network evaluations. To assess the efficiency and accuracy of different integration approaches, both the total simulation time and the global error of the resulting trajectories are compared.

To quantify the computational effort to approximate a solution, we introduce the performance indicator
\begin{equation}
	\delta_t = \frac{n_{\textsc{RK}}\, \textsc{time}(f) + n_{\NN}\, \textsc{time}(\NN)}{\stepsize},
	\label{eq:timing}
\end{equation}
where \( \textsc{time}(\kappa) \) denotes the  time required by the processor to evaluate a given mapping \( \kappa:x\mapsto\kappa(x) \) at an arbitrary input \( x \). The quantity \( \delta_t \) thus approximates the total computational effort per unit of simulation time and serves as a proxy for the time efficiency of the numerical method under consideration.

To evaluate the accuracy of each method, let \( \SystemState^{\text{ref.}}(k\stepsize) \) denote a reference solution of \eqref{eq:model_turbine}, and let \( \tilde{\SystemState}_k \) represent the corresponding approximation obtained by the method under analysis. Denoting by \( (\SystemState^{\text{ref.}})_{k,i} \) the value of the \( i \)-th component of the reference solution at time \( k\stepsize \), the global relative squared error is defined as
\begin{equation}
	\delta_e = \frac{1}{nN} \sum_{k=1}^N \sum_{i=1}^n \left( \frac{|(\SystemState^{\text{ref.}})_{k,i} - (\tilde{\SystemState})_{k,i}|}{|(\SystemState^{\text{ref.}})_{k,i}| + \varepsilon} \right)^2,
	\label{eq:global_error}
\end{equation}
with \( \varepsilon = 10^{-18} \) to prevent division by zero, \( N \) the number of integration steps, and \( n=16\) the dimension of the state vector. The evaluation interval for all methods is fixed to \( [0, 20]\,\si{\second} \).

\subsection{Results and discussion}
For the testing of the performance of the enhanced integrators proposed in Section~\ref{sec:enhanced_integrators} 200 simulations with different speeds in the interval $[5,10]\si{\meter\per\second}$ are considered. The Heun integrator discussed in Example~\ref{exmp:Heun}
shall be enhanced for different step sizes between $\{0.01,...,0.0002\}\si{\second}$ and applied in its hybrid enhanced version as given in Algorithm~\ref{alg:safety_net}.
The results are then compared to the Runge-Kutta method of order three given in Example~\ref{exmp:embedded_heun} (denoted in the following by RK3) and the Heun method with Richarson extrapolation as described in Section~\ref{sec:enhanced_integrators} using the metrics \eqref{eq:timing} and \eqref{eq:global_error}.

Figure~\ref{fig:comparison_results_median_Error_vs_time} shows the evolution of the median value of the error defined in~\eqref{eq:global_error} with respect to the time metric introduced in~\eqref{eq:timing}. It is evident that the proposed enhanced integrators outperform the classical ones in terms of the speed–accuracy trade-off. This becomes particularly clear when comparing results at a fixed error level, i.e., along horizontal lines.

Figure~\ref{fig:comparison_results_median_Error_vs_time} shows the evolution of the median value of the error defined in~\eqref{eq:global_error} with respect to the time metric introduced in~\eqref{eq:timing}. It is evident that the proposed enhanced integrators outperform the classical ones in terms of the speed–accuracy trade-off. This becomes particularly clear when comparing results at a fixed error level, i.e., along horizontal lines. The evolution of the error with respect to \eqref{eq:timing}, the time required for the evaluation of the vector fields and neural networks, suggest that the leading term $\delta$ in the local error \eqref{eq:leading_term_local_error} can be well approximated for step sizes that are greater or equal to $\SI{0.3}{\milli\second}$.

A comparison between the enhanced Heun method and the hybrid solver indicates that both can achieve comparable accuracy; however, the hybrid method is slower. This is attributed to the additional evaluation of the vector field required for the second local error estimate, as described in Algorithm~\ref{alg:safety_net}.

The boxplots in Figures~\ref{fig:comparison_results_box_plot1} and~\ref{fig:comparison_results_box_plot2} display the distribution of the metrics~\eqref{eq:timing} and~\eqref{eq:global_error} along the horizontal and vertical axes, respectively. For comparable values of the time metric~\eqref{eq:timing}, the enhanced Heun method consistently yields significantly lower errors than the RK3 scheme as measured by~\eqref{eq:global_error}. The hybrid solver performs  incurs higher computational cost due to the additional evaluations.  Nonetheless, it exhibits fewer outliers, suggesting a more consistent performance across varying conditions.

\begin{figure}[]
	\centering
	\def\svgwidth{\textwidth}
	\graphicspath{{figures/results_in_training_area/}}
	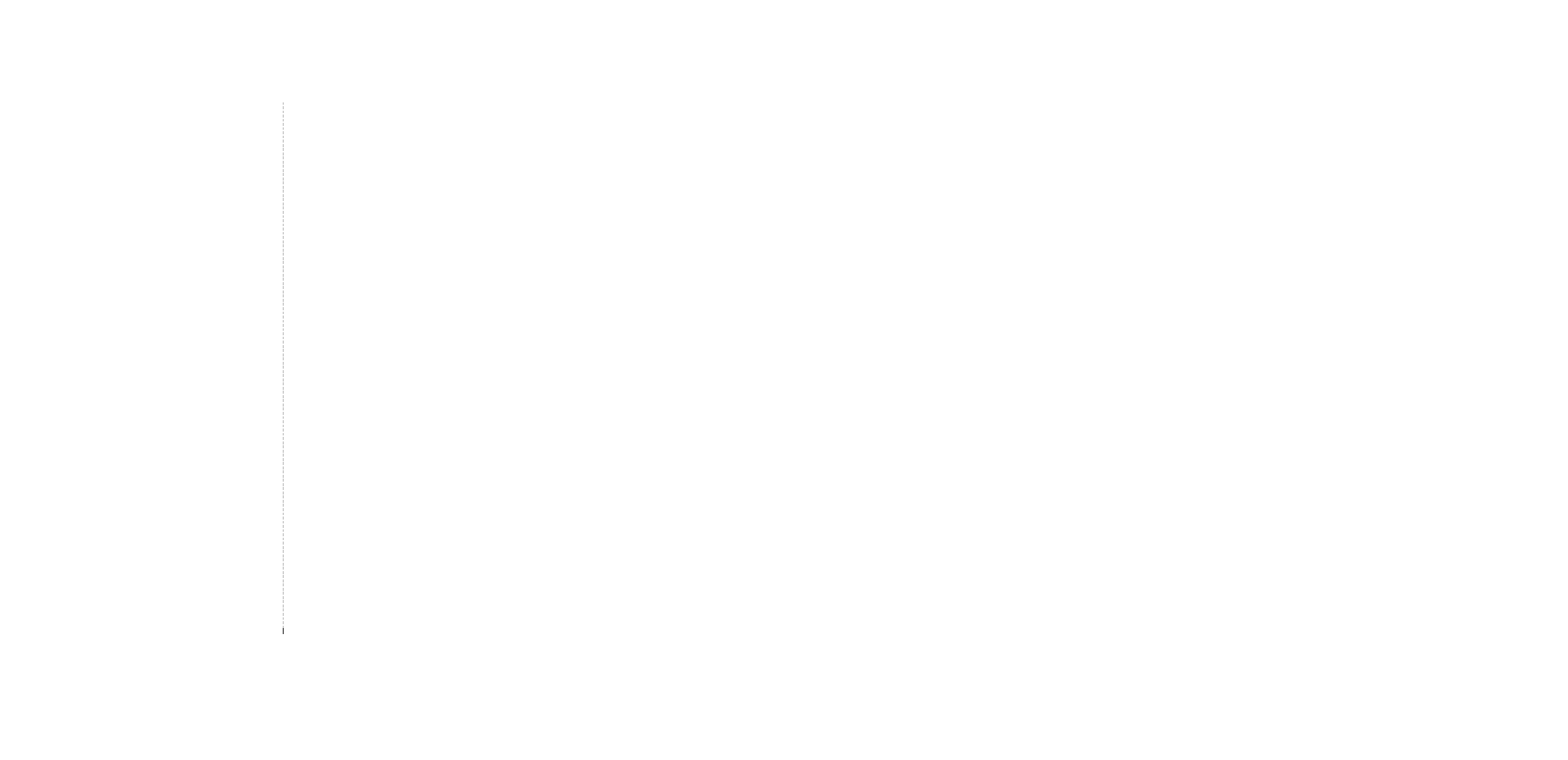
	\caption{Comparison of four solvers with respect to computational effort \( \delta_t \) from \eqref{eq:timing} and normalized global error \( \delta_e \) from \eqref{eq:global_error}. Each point shows the median \( \delta_t \) over \( 10^6 \) evaluations and \( \delta_e \) over 200 simulations with different speeds in the interval $[5,10]\si{\meter\per\second}$.
	} 
	\label{fig:comparison_results_median_Error_vs_time}
\end{figure}

	\begin{figure}
	\def\svgwidth{\textwidth}
	\graphicspath{{figures/results_in_training_area/}}
	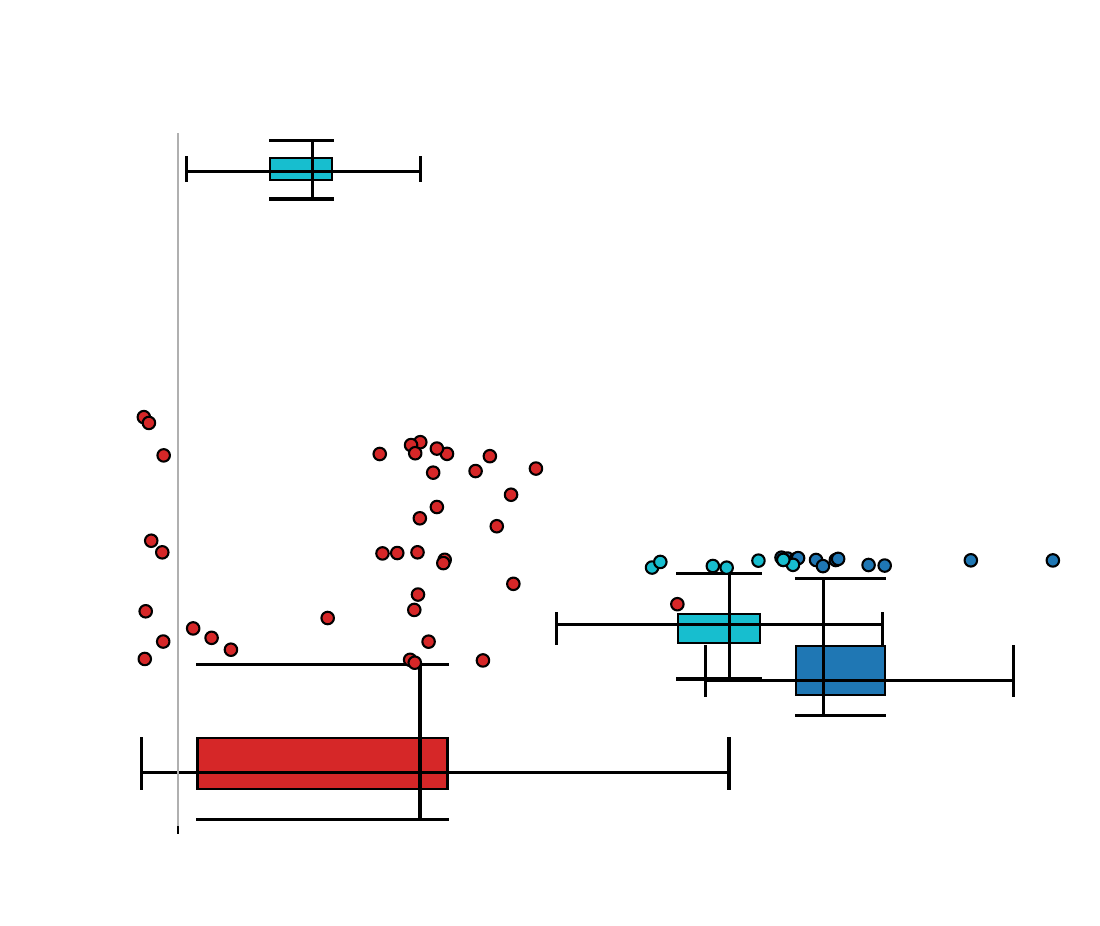
	\caption{Normalized global error \( \delta_e \), as defined in \eqref{eq:global_error}, computed over  200 simulations with different speeds in the interval $[5,10]\si{\meter\per\second}$.  The time step for the neural network based solver is \( h = \SI{10}{\milli\second}\). The corresponding computational cost \( \delta_t \), defined in \eqref{eq:timing}, is evaluated over \( 10^6 \) function calls. The isolated points correspond to outliers. The two RK3 schemes have step sizes of $ \SI{15}{\milli\second}$ and $ \SI{10}{\milli\second}$.
	} 
	\label{fig:comparison_results_box_plot1}
\end{figure}

\begin{figure}
	\def\svgwidth{\textwidth}
	\graphicspath{{figures/results_in_training_area/}}
	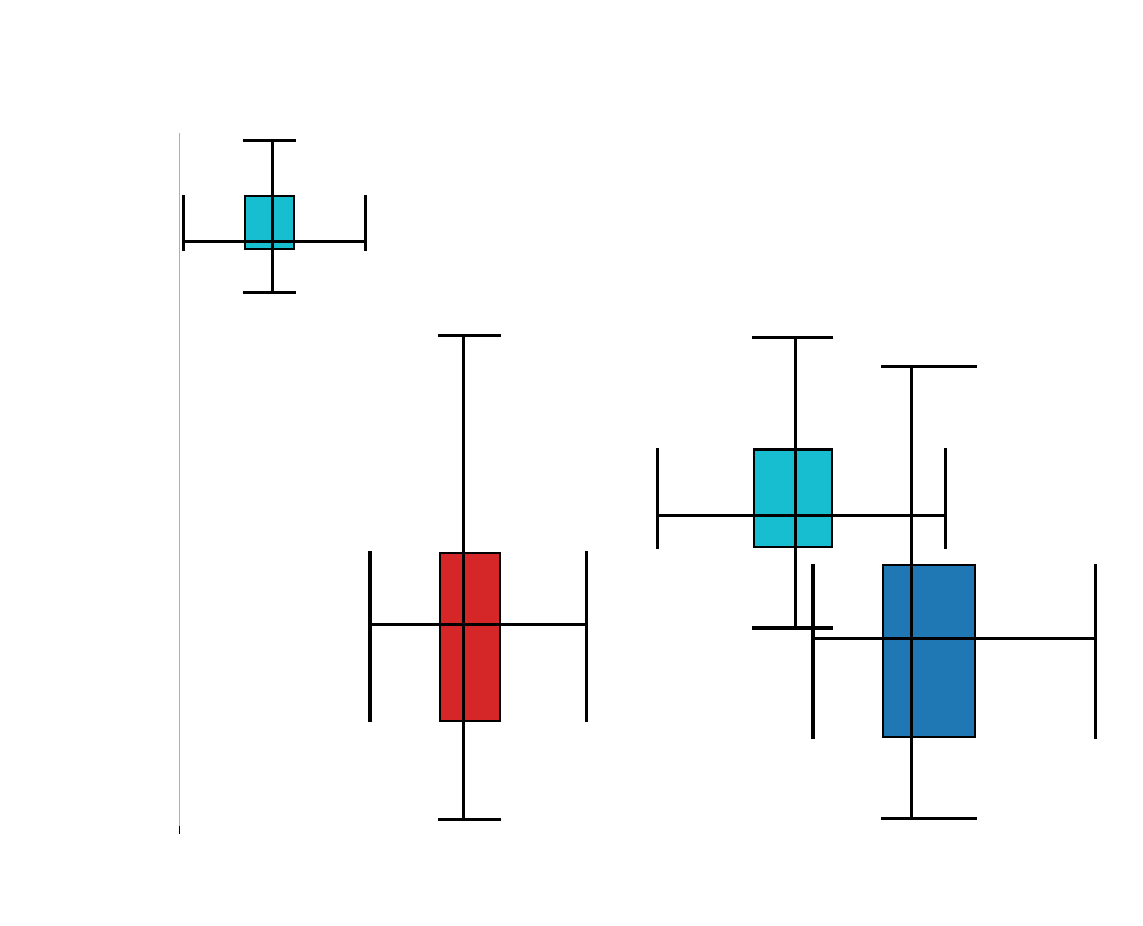
	\caption{Normalized global error \( \delta_e \), as defined in \eqref{eq:global_error}, computed over  200 simulations with different speeds in the interval $[5,10]\si{\meter\per\second}$. The time step for the neural network based solver is \( h = \SI{5}{\milli\second}\). The corresponding computational cost \( \delta_t \), defined in \eqref{eq:timing}, is evaluated over \( 10^6 \) function calls. The two RK3 schemes have step sizes of $ \SI{15}{\milli\second}$ and $ \SI{10}{\milli\second}$
	} 
	\label{fig:comparison_results_box_plot2}
\end{figure}

To further analyze the advantages of the hybrid enhanced solver summarized in Algorithm~\ref{alg:safety_net}, an additional set of $200$ test simulations is considered, with wind speeds sampled from the interval $[5, 20]~\si{\meter\per\second}$. The sampling is such that $75\%$ of the wind speeds lie within this interval, i.e., close to the range used during the training of the neural networks.
The time evolution of the state component $\SystemState_{\mathrm{FA}}$ in Figure~\ref{fig:comparison_results_state_evolution} shows that variations in wind speed can significantly alter the system dynamics. Therefore, it is important to assess the performance of the enhanced solvers outside the wind speed range used in training.

The results are presented in the boxplots in Figures~\ref{fig:comparison_results_box_plot1_far} and~\ref{fig:comparison_results_box_plot2_far}. It can be observed that the error associated with the enhanced Heun method increases significantly in this setting, with several outliers exceeding the maximum error attained by the third-order Runge–Kutta method. In contrast, the performance of the hybrid approach remains good, demonstrating robustness even outside the training regime.

\begin{figure}
	\def\svgwidth{\textwidth}
	\graphicspath{{figures/results_far_from_training_data/}}
	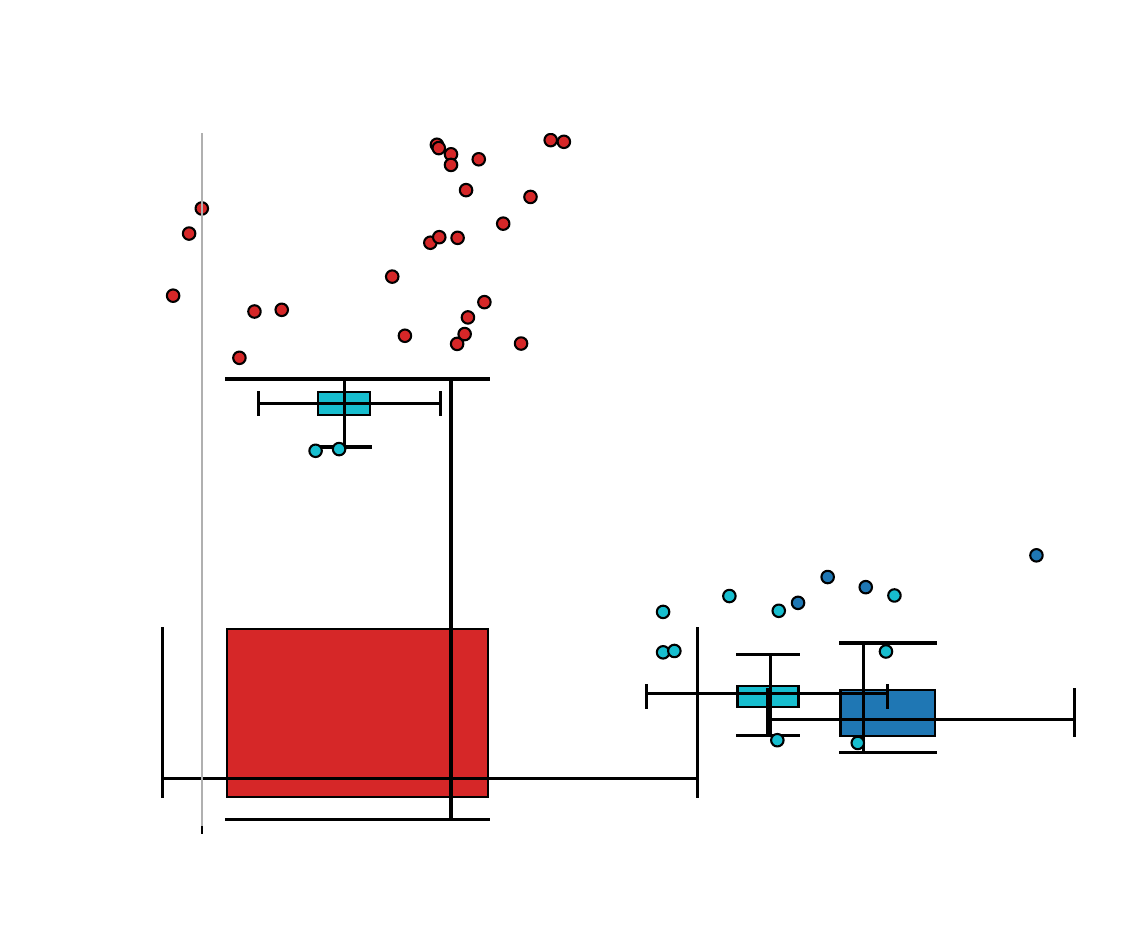
	\caption{Normalized global error \( \delta_e \), as defined in \eqref{eq:global_error}, computed over 200 simulations with varying wind speeds in the interval $[5,10]\si{\meter\per\second}$.  The values are such that $75\%$ of them are in the interval $[5,20]\si{\meter\per\second}$.  The time step for the neural network based solver is \( h = \SI{10}{\milli\second}\). The corresponding computational cost \( \delta_t \), defined in \eqref{eq:timing}, is evaluated over \( 10^6 \) function calls. The isolated points correspond to outliers. The two RK3 schemes have step sizes of $ \SI{15}{\milli\second}$ and $ \SI{10}{\milli\second}$
	} 
	\label{fig:comparison_results_box_plot1_far}
\end{figure}
\begin{figure}
	\def\svgwidth{\textwidth}
	\graphicspath{{figures/results_far_from_training_data/}}
	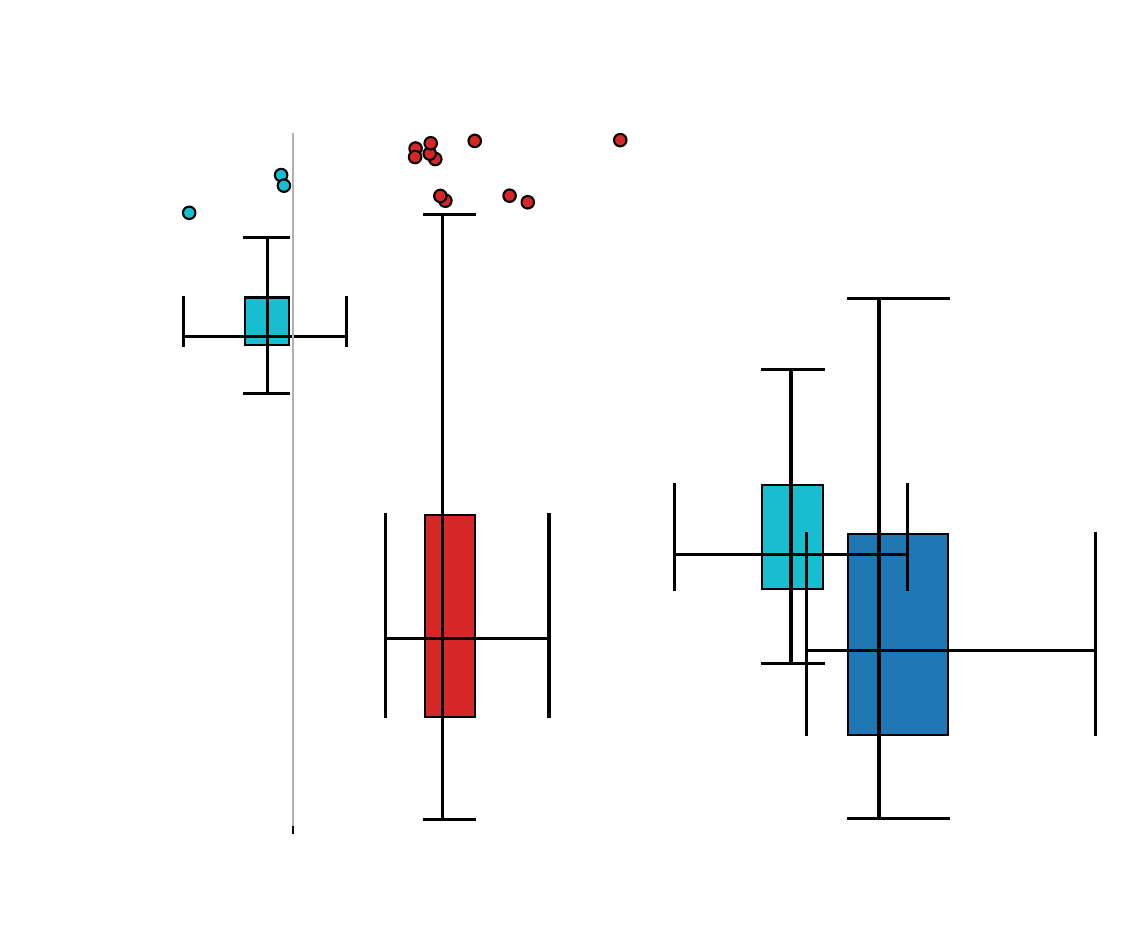
	\caption{Normalized global error \( \delta_e \), as defined in \eqref{eq:global_error}, computed over 200 simulations with varying wind speeds in the interval $[5,10]\si{\meter\per\second}$.  The values are such that $75\%$ are them in the interval $[5,20]\si{\meter\per\second}$.  The time step for the neural network based solver is \( h = \SI{5}{\milli\second}\). The corresponding computational cost \( \delta_t \), defined in \eqref{eq:timing}, is evaluated over \( 10^6 \) function calls. The isolated points correspond to outliers. The two RK3 schemes have step sizes of $ \SI{15}{\milli\second}$ and $ \SI{10}{\milli\second}$
	} 
	\label{fig:comparison_results_box_plot2_far}
\end{figure}
%===============================================================================
% Conclusion
%===============================================================================
\section{Conclusion} \label{sec:Conclusion}
In this work, we have introduced a class of neural network-enhanced integrators that leverage data-driven error correction to improve the accuracy of classical numerical methods for solving differential equations. By training neural networks to let them approximate the local truncation error, these enhanced integrators incorporate additive correction terms into standard schemes, thereby reducing local errors and preserving the stability and convergence properties of the underlying integrators.

Our analytical investigation via backward error analysis confirms that the proposed methodology maintains the theoretical guarantees of classical Runge–Kutta schemes while effectively mitigating the error propagation inherent in long-term simulations. Furthermore, the incorporation of embedded Runge–Kutta techniques serves to control the generalization risk, ensuring that the neural network corrections remain robust even when the state trajectory ventures into regions not encountered during training.

Extensive numerical studies, including realistic simulations of wind turbine dynamics based on models parameterized with data from OpenFast, demonstrate that the enhanced integrators not only achieve the desired accuracy but also offer significant improvements in computational efficiency. These results underscore the potential of combining classical numerical analysis with modern machine learning approaches to address complex, high-dimensional dynamical systems.

Future work will focus on further refining the integration of neural network corrections with adaptive error control strategies and extending the framework to a broader class of integrators. Moreover, problems specific cost functions for the training of neural networks shall be derived and the use of the enhanced integrators for load prediction of life-cycle simulations of wind turbines will be considered. The training of the neural network when the entire state is not known but only a function of it deserves future research.

\paragraph*{Acknowledgment}
The authors thank Jens Geisler and Markus Herrmann-Wicklmayr for most helpful support during the use of the CADynTurb framework \cite{Geisler2021} and Balázs Kovács for the discussions on Section~\ref{sec:safety_net}.

\bibliography{bib_file}
\end{document}